\newcommand{\U}{{\mathcal U}}
\newcommand{\0}{{\mathbf 0}}
\newcommand{\C}{{\mathbb C}}
\newcommand{\Z}{{\mathbb Z}}
\newcommand{\rank}{\mathop{\rm rank}\nolimits}
\newcommand{\arrow}[1]{\stackrel{#1}{\longrightarrow}}
\newtheorem{defn0}{Definition}[section]
\newtheorem{prop0}[defn0]{Proposition}
\newtheorem{conj0}[defn0]{Conjecture}
\newtheorem{thm0}[defn0]{Theorem}
\newtheorem{lem0}[defn0]{Lemma}
\newtheorem{corollary0}[defn0]{Corollary}
\newtheorem{example0}[defn0]{Example}
\newtheorem{remark0}[defn0]{Remark}
\newtheorem{question0}[defn0]{Question}
\newenvironment{defn}{\begin{defn0}}{\end{defn0}}
\newenvironment{prop}{\begin{prop0}}{\end{prop0}}
\newenvironment{conj}{\begin{conj0}}{\end{conj0}}
\newenvironment{thm}{\begin{thm0}}{\end{thm0}}
\newenvironment{exm}{\begin{example0}\rm}{\end{example0}}
\newenvironment{rem}{\begin{remark0}\rm}{\end{remark0}}
\newenvironment{ques}{\begin{question0}\rm}{\end{question0}}
\newcommand{\defref}[1]{Definition~\ref{#1}}
\newcommand{\propref}[1]{Proposition~\ref{#1}}
\newcommand{\thmref}[1]{Theorem~\ref{#1}}
\newcommand{\exref}[1]{Example~\ref{#1}}
\newcommand{\conjref}[1]{Conjecture~\ref{#1}}
\newcommand{\mbf}[1]{{\mathbf #1}}
\title{A New Conjecture, a New Invariant, and a New Non-splitting Result}
\author{David B. Massey}
\keywords{Milnor fiber, non-splitting, $1$-dimensional critical locus, hypersurface, invariant}
\subjclass[2010]{32B15, 32C35, 32C18, 32B10}
\date{}
\begin{document}

\begin{abstract} We prove a new non-splitting result for the cohomology of the Milnor fiber, reminiscent of the classical result proved independently by Lazzeri, Gabrielov, and L\^e in 1973-74.

	We do this while exploring a conjecture of Bobadilla about a stronger version of our non-splitting result. To explore this conjecture, we define a new numerical invariant for hypersurfaces with $1$-dimensional critical loci: the beta invariant. The beta invariant is an invariant of the ambient topological-type of the hypersurface, is non-negative, and is algebraically calculable. Results about the beta invariant remove the topology from Bobadilla's conjecture and turn it into a purely algebraic question.
\end{abstract}

\maketitle

\thispagestyle{fancy}

\lhead{}
\chead{}
\rhead{ }

\lfoot{}
\cfoot{}
\rfoot{}

\section{Introduction} 

Throughout this paper, we suppose that $\U$ is an open neighborhood of the origin in $\C^{n+1}$, and that \hbox{$f:(\U, \0)\rightarrow (\C, 0)$} is a complex analytic function with a $1$-dimensional critical locus at the origin, i.e., $\dim_\0\Sigma f=1$. We use coordinates $(z_0, \dots, z_n)$ on $\U$ and, to omit the non-reduced curve case, we assume that $n\geq 2$, which implies that $f$ is reduced.

We assume that $L$ is a linear form which is generic enough so that $\dim_\0\Sigma\big(f_{|_{V(L)}}\big)=0$. For convenience, possibly after a linear change of coordinates, we may assume that $L$ is the first coordinate $z_0$, so that we have $\dim_\0\Sigma\big(f_{|_{V(z_0)}}\big)=0$.

\medskip

We assume that $\U$ is chosen (e.g., as a small enough open ball) so that, for each irreducible component $C$ of $\Sigma f$:
\begin{enumerate}

\item[$\bullet$] $C$ contains the origin;

\item[$\bullet$] $C$ is contained in the vanishing locus $V(f)$ of $f$; and

\item[$\bullet$]  $C-\{\0\}$ is homeomorphic to a punctured disk.

\end{enumerate}

\smallskip

Furthermore, we assume that $\U$ is so small that, for each irreducible component $C$ of $\Sigma f$:

\begin{enumerate}

\item[$\bullet$] the isomorphism-type of the reduced integral cohomology groups $\widetilde H^*(F_{f,\mathbf p};\Z)$ is independent of the choice of $\mathbf p\in C-\{\0\}$. This is the same isomorphism-type as the reduced cohomology at $\mathbf p$ of the Milnor fiber of the hyperplane slice $f_{|_{V(z_0-z_0(\mathbf p))}}$, for $\mathbf p\in C-\{\0\}$ close enough to $\0$. Such a slice yields an isolated critical point at $\mathbf p$, and so this cohomology is non-zero in a single degree, namely degree $(n-1)$, and 
$$
\widetilde H^{n-1}(F_{f,\mathbf p};\Z) \ \cong \ \Z^{{\stackrel{\circ}{\mu}}_C},
$$
where ${\stackrel{\circ}{\mu}}_C$ is the Milnor number at $\mathbf p$ of $f_{|_{V(z_0-z_0(\mathbf p))}}$.

\end{enumerate}

\medskip

We can now state the classic non-splitting result, proved independently by L\^e in \cite{leacampo}, Lazzeri in \cite{lazzerimono}, and Gabrielov in \cite{gabrielov} in 1973-1974.

\begin{thm}\label{thm:nonsplit} \textnormal{(L\^e-Lazzeri-Gabrielov)} Suppose that the Milnor number of $f_{|_{V(z_0)}}$ at the origin is equal to 
$$\sum_C \left(C\cdot V(z_0)\right)_\0{\stackrel{\circ}{\mu}}_C,$$ where the sum is over the irreducible components $C$ of $\Sigma f$ and $\left(C\cdot V(z_0)\right)_\0$ is the intersection number (which would be the multiplicity of $C$ at $\0$ if $z_0$ were generic enough). That is, suppose that the Milnor number in the $z_0=0$ slice ``splits'' over the critical points of $f$ in the slice where $z_0=t$ for a small value of $t\neq0$.

Then, in fact, $\Sigma f$ has a single irreducible component which is smooth and is transversely intersected by $V(z_0)$ at $\0$.
\end{thm}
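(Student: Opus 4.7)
The plan is to combine the L\^e-Iomdin-Massey decomposition --- which writes $\mu_\0(f_{|_{V(z_0)}})$ as a contribution from $\Sigma f$ plus a ``polar'' contribution --- with an argument that the latter vanishes if and only if $\Sigma f$ has the minimal geometry claimed.

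First I would analyze, for small $t\neq 0$, the critical locus of $f_{|_{V(z_0-t)}}$ near $\0$. This locus splits into (a) the $\Sigma f$-critical points $\Sigma f\cap V(z_0-t)$, which by the running hypotheses of the excerpt consist of exactly $(C\cdot V(z_0))_\0$ points on each component $C$ with local Milnor number $\stackrel{\circ}{\mu}_C$, and (b) the polar critical points $\Gamma^1_{f,z_0}\cap V(z_0-t)$, where $\Gamma^1_{f,z_0}$ denotes the closure of $V(\partial f/\partial z_1,\ldots,\partial f/\partial z_n)\setminus\Sigma f$. Conservation of total Milnor number in the $1$-parameter family of isolated singularities $\{f_{|_{V(z_0-t)}}\}_t$ then yields
$$\mu_\0\bigl(f_{|_{V(z_0)}}\bigr)\ =\ \sum_C(C\cdot V(z_0))_\0\,\stackrel{\circ}{\mu}_C\ +\ \lambda^0_{f,z_0}(\0),$$
where $\lambda^0_{f,z_0}(\0)\geq 0$ is the total polar contribution --- equivalently, the $0$-th L\^e number of $f$ at $\0$ with respect to $z_0$. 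The theorem's hypothesis is therefore equivalent to $\lambda^0_{f,z_0}(\0)=0$, i.e., to the statement that $\Gamma^1_{f,z_0}$ does not pass through $\0$.

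The heart of the argument --- and the step I expect to be the main obstacle --- is extracting the three geometric conclusions from $\lambda^0_{f,z_0}(\0)=0$. I would proceed by contrapositive, showing that each of (i) $\Sigma f$ having two or more irreducible components through $\0$, (ii) $\Sigma f$ having a singular branch at $\0$, and (iii) a branch of $\Sigma f$ being tangent to $V(z_0)$ at $\0$ forces a component of $\Gamma^1_{f,z_0}$ to pass through $\0$. A natural route is a local-algebra analysis of the Jacobian ideal $(\partial f/\partial z_1,\ldots,\partial f/\partial z_n)$ at $\0$: under any of (i)--(iii) the primary decomposition of this ideal at $\0$ acquires an embedded or isolated component beyond those carried by $\Sigma f$, and that extra component lies in $\Gamma^1_{f,z_0}$ and contains $\0$. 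Ruling out all three bad features by the assumption $\lambda^0_{f,z_0}(\0)=0$ gives precisely that $\Sigma f$ is a single smooth branch transverse to $V(z_0)$ at $\0$, completing the argument.
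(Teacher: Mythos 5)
Your reduction in the first two paragraphs is sound and is the standard one: conservation of the Milnor number in the family $f_{|_{V(z_0-t)}}$ gives
$\mu_\0\bigl(f_{|_{V(z_0)}}\bigr)=\sum_C\left(C\cdot V(z_0)\right)_\0{\stackrel{\circ}{\mu}}_C+\left(\Gamma^1_{f,z_0}\cdot V(z_0)\right)_\0$, so the hypothesis is equivalent to $\0\notin\Gamma^1_{f,z_0}$, i.e., to the statement that for small $t\neq 0$ every critical point of $f_{|_{V(z_0-t)}}$ near $\0$ lies on $\Sigma f$. (One correction: the polar contribution here is $\left(\Gamma^1_{f,z_0}\cdot V(z_0)\right)_\0$, not the L\^e number $\lambda^0_{f,z_0}=\left(\Gamma^1_{f,z_0}\cdot V\left(\partial f/\partial z_0\right)\right)_\0$; these numbers differ in general, although both vanish exactly when $\0\notin\Gamma^1_{f,z_0}$, so your equivalence survives.)

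The genuine gap is in what you yourself flag as the heart of the argument, and it is not a repairable detail. The implication ``two or more components, or a singular branch, or a branch tangent to $V(z_0)$, forces a component of $\Gamma^1_{f,z_0}$ through $\0$'' \emph{is} the theorem, merely restated, and no primary-decomposition bookkeeping of the Jacobian ideal is known to yield it; in particular, an \emph{embedded} component of $V(\partial f/\partial z_1,\dots,\partial f/\partial z_n)$ at $\0$ contributes nothing to the cycle $\Gamma^1_{f,z_0}$, hence nothing to the polar intersection number, so the mechanism you describe does not even produce the object you need. The paper gives no proof (it cites L\^e, Lazzeri, and Gabrielov), and every known proof injects a genuinely topological ingredient. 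The L\^e--Lazzeri route uses A'Campo's theorem that the Lefschetz number of the local monodromy at a singular point of the zero locus is zero: since $\Sigma f\subseteq V(f)$, your reduction shows that all critical points $p_1,\dots,p_k$ of $g_t:=f_{|_{V(z_0-t)}}$ near $\0$ lie in $g_t^{-1}(0)$; the monodromy $h$ of the Milnor fibration of $f_{|_{V(z_0)}}$ at $\0$ then localizes at these points, giving $\Lambda(h)=\chi(F)-\sum_i\chi(F_{p_i})+\sum_i\Lambda(h_{p_i})$, and applying A'Campo at $\0$ and at each $p_i$, together with $\chi(F)=1+(-1)^{n-1}\mu$ and $\chi(F_{p_i})=1+(-1)^{n-1}\mu_i$ with $\sum_i\mu_i=\mu$, yields $0=1-k$. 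Hence $k=1$, so $\Sigma f\cap V(z_0-t)$ is a single point, which is exactly the stated conclusion. (Gabrielov's proof instead uses connectivity of the intersection diagram of vanishing cycles --- again topological.) Without A'Campo's theorem or an equivalent input, your outline does not close.
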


\medskip

\begin{rem} We have stated the above theorem in a slightly more general form than the original statements, but the proofs remain the same.

We should also comment that there is a pleasant priority ``dispute'' as to which of L\^e, Lazzeri, or Gabrielov first proved the above result. Many years ago, we contacted all three authors, and each one claimed that one of the other two proved the result first. 
\end{rem}

\medskip

Now, roughly 40 years after \thmref{thm:nonsplit} was proved, Javier Fern\'andez de Bobadilla has made a conjecture, which looks like it should be related to \thmref{thm:nonsplit}:

\begin{conj}\label{conj:bob} \textnormal{(Fern\'andez de Bobadilla)} Suppose that the critical locus of $f$ has a single irreducible component $C$ and that the isomorphism-type of the cohomology groups $\widetilde H^*(F_{f,\mathbf p};\Z)$ is independent of the choice of $\mathbf p\in C$, i.e., suppose that $\widetilde H^*(F_{f, \0};\Z)$ is non-zero only in degree $(n-1)$ and
$$
\widetilde H^{n-1}(F_{f, \0};\Z) \ \cong \ \Z^{{\stackrel{\circ}{\mu}}_C}.
$$ 
Then, $C$ is smooth.
\end{conj}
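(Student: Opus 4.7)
The plan is to reduce Bobadilla's conjecture to the L\^e-Lazzeri-Gabrielov non-splitting result (\thmref{thm:nonsplit}) via Massey's L\^e-cycle machinery for $1$-dimensional critical loci.

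First, I would choose a linear form $z_0$ that is simultaneously prepolar for $f$ at $\0$ (so that the L\^e numbers $\lambda^0 := \lambda^0_{f,\0}(z_0)$ and $\lambda^1 := \lambda^1_{f,\0}(z_0)$ are defined and well-behaved) and whose kernel does not contain the tangent cone of $C$ at $\0$; a sufficiently generic linear form satisfies both. Because $\Sigma f$ has the single component $C$,
$$
\lambda^1 \ = \ (C\cdot V(z_0))_\0\cdot {\stackrel{\circ}{\mu}}_C \ = \ \operatorname{mult}_\0(C)\cdot {\stackrel{\circ}{\mu}}_C,
$$
and Massey's identity reads $\mu_0 = \lambda^0 + \lambda^1$, where $\mu_0 := \mu(f_{|_{V(z_0)}},\0)$.

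Next, invoke Massey's attaching theorem for $1$-dimensional critical loci: $F_{f,\0}$ has the homotopy type of $F_{f_{|_{V(z_0)}},\0}$ with $\lambda^0$ real $n$-cells attached. Since $F_{f_{|_{V(z_0)}},\0}\simeq \bigvee_{\mu_0} S^{n-1}$, the associated cofiber sequence $F_{f_{|_{V(z_0)}},\0}\hookrightarrow F_{f,\0}\to \bigvee_{\lambda^0} S^n$ yields the four-term exact sequence
$$
0 \longrightarrow \widetilde H^{n-1}(F_{f,\0};\Z) \longrightarrow \Z^{\mu_0} \longrightarrow \Z^{\lambda^0} \longrightarrow \widetilde H^{n}(F_{f,\0};\Z) \longrightarrow 0,
$$
whence $\operatorname{rk}\widetilde H^{n-1}(F_{f,\0}) - \operatorname{rk}\widetilde H^n(F_{f,\0}) = \mu_0 - \lambda^0 = \lambda^1$.

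Now substitute the conjecture's hypothesis $\widetilde H^{n-1}(F_{f,\0};\Z)\cong \Z^{{\stackrel{\circ}{\mu}}_C}$ and $\widetilde H^n(F_{f,\0};\Z) = 0$: this forces ${\stackrel{\circ}{\mu}}_C = \lambda^1 = \operatorname{mult}_\0(C)\cdot {\stackrel{\circ}{\mu}}_C$. Since $C\subset \Sigma f$ implies ${\stackrel{\circ}{\mu}}_C \geq 1$, we conclude $\operatorname{mult}_\0(C)=1$, i.e., $C$ is smooth at $\0$.

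The step I expect to be most delicate is the appeal to the attaching theorem with \emph{exactly} $\lambda^0$ $n$-cells at the level of integral cohomology, together with the simultaneous genericity of $z_0$ for the L\^e-number framework and for the tangent cone of $C$. A safer fallback is to replace the attaching argument by the Euler-characteristic identity $\widetilde\chi(F_{f,\0}) = (-1)^{n-1}\lambda^1$; combined with the hypothesis that $\widetilde H^*(F_{f,\0};\Z)$ is concentrated in a single degree, this already pins down ${\stackrel{\circ}{\mu}}_C = \lambda^1$, and the remainder of the argument (using non-negativity of $\lambda^0$ and the intersection-multiplicity inequality $(C\cdot V(z_0))_\0 \geq \operatorname{mult}_\0 C$) carries through unchanged.
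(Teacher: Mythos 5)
You are attempting to prove \conjref{conj:bob}, which the paper explicitly leaves open: it states that the conjecture ``does not follow from \thmref{thm:nonsplit} (in any way that has yet been tried) and the conjecture remains a conjecture.'' The paper's own contribution is only a \emph{reformulation} of the hypothesis as $\beta_f=0$ (\thmref{thm:betaequiv}); it proves no smoothness conclusion. So your argument must contain an error, and it lies in the numerical identities you feed into the attaching theorem.

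The identity $\mu_0=\lambda^0+\lambda^1$ is false. The correct statement (L\^e/Teissier) is $\mu_0=\left(\Gamma^1_{f,z_0}\cdot V(z_0)\right)_\0+\lambda^1_{f,z_0}$, and the intersection number of the polar curve with the hyperplane is \emph{not} the L\^e number $\lambda^0_{f,z_0}=\left(\Gamma^1_{f,z_0}\cdot V\left(\frac{\partial f}{\partial z_0}\right)\right)_\0$. Likewise, L\^e's attaching theorem attaches $\left(\Gamma^1_{f,z_0}\cdot V(f)\right)_\0=\left(\Gamma^1_{f,z_0}\cdot V(z_0)\right)_\0+\lambda^0_{f,z_0}$ cells of dimension $n$, not $\lambda^0_{f,z_0}$ of them. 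In the paper's own example $f=z^2+(y^2-x^3)^d$ with $z_0=x$, one has $\mu_0=2d-1$ while $\lambda^0+\lambda^1=(3d-1)+(2d-2)=5d-3$. With the correct counts, your four-term sequence collapses to exactly the sequence $(\dagger)$ of \thmref{thm:le}, whose rank computation gives $\tilde b_{n-1}-\tilde b_{n}=\lambda^1_{f,z_0}-\lambda^0_{f,z_0}$, not $\lambda^1_{f,z_0}$. Substituting the hypotheses $\tilde b_{n}=0$ and $\tilde b_{n-1}={\stackrel{\circ}{\mu}}_C$ therefore yields only
$$\lambda^0_{f,z_0} \ = \ \left[\big(C\cdot V(z_0)\big)_\0-1\right]{\stackrel{\circ}{\mu}}_C,$$
i.e., $\beta_f=0$; since $\lambda^0_{f,z_0}$ need not vanish, you cannot conclude $\big(C\cdot V(z_0)\big)_\0=1$. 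The entire difficulty of the conjecture is to rule out a singular $C$ whose positive polar number $\lambda^0_{f,z_0}$ exactly compensates the excess intersection $\left[\big(C\cdot V(z_0)\big)_\0-1\right]{\stackrel{\circ}{\mu}}_C$. Your Euler-characteristic fallback has the same flaw: $\widetilde\chi(F_{f,\0})=(-1)^{n-1}\big(\lambda^1_{f,z_0}-\lambda^0_{f,z_0}\big)$, not $(-1)^{n-1}\lambda^1_{f,z_0}$, so it again recovers only $\beta_f=0$ and leaves the conjecture untouched.
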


\medskip

However, this conjecture does {\bf not} follow from \thmref{thm:nonsplit} (in any way that has yet been tried) and the conjecture remains a conjecture.

\medskip

Before we continue, we give the obvious {\bf generalized Bobadilla conjecture} for the case where $\Sigma f$ may have more than a single irreducible component:

\begin{conj}\label{conj:genbob} Suppose that $\widetilde H^*(F_{f, \0};\Z)$ is non-zero only in degree $(n-1)$ and
$$
\widetilde H^{n-1}(F_{f, \0};\Z) \ \cong \ \bigoplus_C\Z^{{\stackrel{\circ}{\mu}}_C},
$$ 
where the sum is over all irreducible components $C$ of $\Sigma f$. Then, $\Sigma f$ has a single irreducible component, which is smooth.

\end{conj}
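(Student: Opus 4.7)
The natural plan is to sandwich the Milnor number $\mu_{\tilde f}$ of a generic hyperplane slice $\tilde f := f_{|_{V(z_0)}}$ so tightly that the hypothesis of \thmref{thm:nonsplit} becomes forced, at which point the classical non-splitting theorem finishes the job. The lower bound comes from L\^e's attaching theorem: for a generic linear form $z_0$, the Milnor fiber $F_{f, \0}$ has the homotopy type of $F_{\tilde f, \0}$ with $n$-cells attached. Since $F_{\tilde f, \0}$ is a bouquet of $\mu_{\tilde f}$ spheres of dimension $n-1$, the relative cohomology $H^*(F_{f, \0}, F_{\tilde f, \0};\Z)$ is free and concentrated in degree $n$, and the long exact sequence of the pair collapses to
\[
0 \ \to \ H^{n-1}(F_{f, \0};\Z) \ \to \ H^{n-1}(F_{\tilde f, \0};\Z) \ \to \ H^n(F_{f, \0}, F_{\tilde f, \0};\Z) \ \to \ H^n(F_{f, \0};\Z) \ \to \ 0.
\]
Plugging in the hypotheses $H^{n-1}(F_{f, \0};\Z) \cong \bigoplus_C \Z^{\stackrel{\circ}{\mu}_C}$ and $H^n(F_{f, \0};\Z) = 0$ and reading off ranks yields $\sum_C \stackrel{\circ}{\mu}_C \leq \mu_{\tilde f}$, the gap being exactly the number of attached $n$-cells.

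This is the ceiling of what the cell-attaching argument produces in isolation. For the matching upper bound one would want $\mu_{\tilde f} \leq \sum_C (C \cdot V(z_0))_\0 \stackrel{\circ}{\mu}_C$; combined with upper semicontinuity of the Milnor number along the family $f_{|_{V(z_0 - t)}}$, which always gives the inequality $\mu_{\tilde f} \geq \sum_C (C \cdot V(z_0))_\0 \stackrel{\circ}{\mu}_C$, this would force the equality case of \thmref{thm:nonsplit} and conclude the proof. Unfortunately, no such upper bound is available from the hypothesis alone; and because $(C \cdot V(z_0))_\0 \geq 1$, the semicontinuity inequality actually points in the \emph{same} direction as the cell-attaching inequality, so the two bounds cannot pinch $\mu_{\tilde f}$ from both sides.

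The genuine obstacle is to control $\mu_{\tilde f}$ from above using only the local cohomological hypothesis at $\0$, and this is where I expect the plan to stall. One promising avenue to circumvent the obstacle is to work perverse-sheaf theoretically with the shifted vanishing cycle complex $\phi_f \Z_\U[n]$, which is perverse on $V(f)$ and supported on the $1$-dimensional $\Sigma f$; the hypothesis of \conjref{conj:genbob} translates into a concentration statement for its stalk at $\0$, and if that concentration can be shown to force $\phi_f \Z_\U[n]$ to split as a direct sum of intersection cohomology complexes on the components of $\Sigma f$ with trivial local systems, the local classification of perverse sheaves on curves would give both that each component is smooth and that there is only one. A second avenue is to induct on the number of components, deforming $f$ so as to separate the components of $\Sigma f$ and applying Bobadilla's single-component \conjref{conj:bob} to each piece; but this route presupposes \conjref{conj:bob}, which is itself open. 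In either approach, the heart of the difficulty is the same: teasing apart, from a single local cohomological datum at $\0$, the contribution coming from the point $\0$ itself and the contribution coming from smooth points of $\Sigma f$ near $\0$.
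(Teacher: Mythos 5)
This statement is a \emph{conjecture}: the paper does not prove it, explicitly says it remains open, and your proposal (to its credit) does not claim to prove it either. So the honest assessment is that there is no proof to compare against; what can be compared is your diagnosis of the difficulty versus what the paper actually manages to establish. Your analysis of why the slice route stalls is correct and matches the paper's own remark that the conjecture ``does not follow from Theorem~\ref{thm:nonsplit} in any way that has yet been tried'': Lê's attaching argument gives $\mu_{\tilde f}\geq\sigma_f$, semicontinuity gives $\mu_{\tilde f}\geq\lambda^1_{f,z_0}=\sum_C(C\cdot V(z_0))_\0{\stackrel{\circ}{\mu}}_C\geq\sigma_f$, and both inequalities point the same way, so Theorem~\ref{thm:nonsplit} cannot be triggered. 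In the paper's language, the hypothesis only forces $\lambda^0_{f,z_0}=\lambda^1_{f,z_0}-\sigma_f$, i.e.\ $\beta_f=0$, not $\lambda^0_{f,z_0}=0$.

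What you miss is that half of the conclusion --- that $\Sigma f$ has a \emph{single} irreducible component --- is not conjectural: it is the paper's main new theorem (\thmref{thm:betaequiv}), and it is proved by a route you gesture at but do not carry out. The ingredients are Siersma's inclusion $\widetilde H^{n-1}(F_{f,\0};\Z)\hookrightarrow\bigoplus_C\ker\{\operatorname{id}-h_C\}$ (\thmref{thm:incl}), which under the hypothesis $\tilde b_{n-1}=\sigma_f$ must be an isomorphism with each vertical monodromy $h_C$ trivial, combined with A'Campo's theorem: the Lefschetz number of the Milnor monodromy on $\widetilde H^*(F_{f,\0})$ is zero (since $f\in\mathfrak m^2$), while the trace in degree $n-1$ computes to $(-1)^n c_f$ via the restriction to the components (\propref{prop:trace}); since $\widetilde H^n(F_{f,\0};\Z)=0$, this forces $c_f=1$. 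This is exactly the kind of ``teasing apart the contribution of $\0$ from the contributions of nearby smooth points'' you describe as the heart of the difficulty, and for the non-splitting half it succeeds with no perverse-sheaf machinery beyond Siersma's lemma. Only the smoothness of that single component remains open (Bobadilla's original \conjref{conj:bob}), and the paper's reformulation of the entire hypothesis as the algebraic condition $\beta_f=0$ is its proposed way forward. Your proposal would be substantially strengthened by isolating and proving the $c_f=1$ statement rather than treating the whole conclusion as inaccessible.
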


\bigskip

This paper represents our initial attack on the problem. Here, after recalling earlier definitions and results, we obtain the following new results:

\medskip

\begin{enumerate}

\item In \defref{def:beta}, we define a new invariant $\beta_f=\beta_{f, z_0}$, which algebraically calculable.

\smallskip

\item In \thmref{thm:betainv}, we prove that $\beta_f$ is an invariant of the the ambient topological-type of the hypersuface and, in particular, is independent of the linear form $z_0$.

\smallskip

\item In \thmref{thm:betti}, we show that $\beta_f\geq0$, and that $\beta_f=0$ implies that $\widetilde H^{n}(F_{f, \0};\Z)=0$.
\smallskip

\item We prove in \thmref{thm:betaequiv}  that, in fact, $\beta_f=0$ is precisely equivalent to the hypotheses of our generalized Bobadilla conjecture, \conjref{conj:genbob}. We thus remove the topology from the hypotheses of the conjecture. 

Furthermore, we prove in this theorem that $\beta_f=0$ implies that $\Sigma f$ has a single irreducible component, i.e., the cohomology does not ``split'' over various components. Hence, we are back in the setting of Bobadilla's original conjecture.

\smallskip

\item In \thmref{thm:beta2}, we discuss the case where $\beta_f=1$ and show that, in this case, the critical locus must have precisely two irreducible components.

\end{enumerate}

\bigskip

We thank Javier Fern\'andez de Bobadilla for discussing his conjecture with us, and L\^e D\~ung Tr\'ang for valuable conversations on this topic.

\medskip

\section{Notation and Known Results} 

Our assumption that 
$$\dim_\0\Sigma\big(f_{|_{V(z_0)}}\big) \ = \ \dim_\0V\left(z_0, \frac{\partial f}{\partial z_1}, \dots, \frac{\partial f}{\partial z_n}\right) \ = \ 0$$ is precisely equivalent to saying that $V\left(\frac{\partial f}{\partial z_1}, \dots, \frac{\partial f}{\partial z_n}\right)$ is purely $1$-dimensional (and not empty) at the origin and is properly intersected at the origin by $V(z_0)$.

In terms of analytic cycles, 
$$
V\left(\frac{\partial f}{\partial z_1}, \dots, \frac{\partial f}{\partial z_n}\right) \ = \ \Gamma^1_{f,z_0} \ + \ \Lambda^1_{f, z_0},
$$
where $\Gamma^1_{f,z_0}$ is the relative polar curve of $f$, which consists of components not contained in $\Sigma f$, and $\Lambda^1_{f, z_0}$ is the $1$-dimensional L\^e cycle, which consists of components which are contained in $\Sigma f$. See Definition 1.11 of \cite{lecycles}.

Note that $V\left(\frac{\partial f}{\partial z_0}\right)$ necessarily intersects $\Gamma^1_{f,z_0}$ properly at $\0$, and that $V(z_0)$ intersects $\Lambda^1_{f, z_0}$ properly at $\0$ by our assumption.

Letting $C$'s denote the underlying reduced components of $\Sigma f$ at $\0$, at the origin, we have
$$
\Lambda^1_{f, z_0}  \ = \ \sum_C{\stackrel{\circ}{\mu}}_C[C],
$$
where we use the square brackets to indicate that we are considering $C$ as a cycle, and ${\stackrel{\circ}{\mu}}_C$ is the Milnor number of $f$, restricted to a generic hyperplane slice, at a point $\mbf p$ on $C-\{\0\}$ close to $\0$. See Remark 1.19  of \cite{lecycles}.

The intersection numbers $\left(\Gamma^1_{f, z_0} \cdot V\left(\frac{\partial f}{\partial z_0}\right)\right)_\0$ and $\left(\Lambda^1_{f, z_0} \cdot V(z_0)\right)_\0$ are the L\^e numbers $\lambda^0_{f,z_0}$ and $\lambda^1_{f,z_0}$ (at the origin). See Definition 1.11 of \cite{lecycles}. Note that
$$\lambda^1_{f,z_0}=\sum_C \left(C\cdot V(z_0)\right)_\0{\stackrel{\circ}{\mu}}_C.$$

\bigskip

We give here a list of the numbers, other than the beta invariant, which will be used throughout this paper:

\begin{enumerate}

\item As we have used several times already, ${\stackrel{\circ}{\mu}}_C$ is the Milnor number of a generic hyperplane slice at a point $\mbf p\neq \0$ on the irreducible component $C$ of $\Sigma f$.

\medskip

\item We use the L\^e numbers $\lambda^0_{f,z_0}$ and $\lambda^1_{f,z_0}$.

\medskip

\item Throughout, we will use the Betti numbers $\tilde b_{n-1}$ and $\tilde b_n$ of the reduced integral cohomology of the Milnor fiber $F_{f, \0}$ of $f$ at the origin in degrees $(n-1)$ and $n$. (We do not need to write ``reduced'' here and, yet, we do so because we are thinking of the vanishing cycles, not the nearby cycles.)

\medskip

\item We let $\sigma_f:=\sum_C {\stackrel{\circ}{\mu}}_C$.

\smallskip

Note the lack of the intersection multiplicities in this summation. Thus, $\lambda^1_{f,z_0}= \sigma_f$ if and only if each irreducible component $C$ of $\Sigma f$ is smooth and transversely intersected at the origin by $V(z_0)$.

\medskip

\item Since we are using cohomology, not homology, $\widetilde H^{n-1}(F_{f, \0};\Z)$ is free Abelian, but $\widetilde H^{n}(F_{f, \0};\Z)$ may contain torsion. For each prime $p$, we let $\tau_p$ denote the number of $p$-torsion direct summands of $\widetilde H^{n}(F_{f, \0};\Z)$. With this notation, and our notation for the Betti numbers, the Universal Coefficient Theorem tells us that
$$
\operatorname{dim} \widetilde H^{n-1}(F_{f,\0};\Z/p\Z)= \tilde b_{n-1}+\tau_p\hskip 0.2in\textnormal{ and } \hskip 0.2in\operatorname{dim} \widetilde H^{n}(F_{f,\0};\Z/p\Z)= \tilde b_{n}+\tau_p.
$$

\medskip

\item Finally, we let $c_f$ denote the number of irreducible components of $\Sigma f$.

\end{enumerate}

\bigskip

In Corollary 10.10 of \cite{lecycles} (though there is an indexing typographical error), we proved a fundamental result linking the L\^e numbers and the Betti numbers of the Milnor fiber, which continues to hold with coefficients in $\Z/p\Z$.

\begin{thm}\label{thm:le} There is an exact sequence 
 $$0\rightarrow\widetilde H^{n-1}(F_{f,\0};\Z)\rightarrow\Z^{\lambda^1_{f, z_0}}\arrow{\delta}\Z^{\lambda^0_{f, z_0}}\rightarrow \widetilde H^{n}(F_{f,\0};\Z)\rightarrow 0.\eqno{(\dagger)},$$
 and so, 
 $$\tilde b_{n-1}\leq \lambda^1_{f, z_0},  \ \ \tilde b_{n}\leq \lambda^0_{f, z_0}, \ \textnormal{ and } \tilde b_{n}-\tilde b_{n-1} \ = \ \lambda^0_{f, z_0}-\lambda^1_{f, z_0}.$$
 
 \bigskip
 
 In addition, for each prime number $p$, there is an exact sequence 
 $$0\rightarrow\widetilde H^{n-1}(F_{f,\0};\Z/p\Z)\rightarrow(\Z/p\Z)^{\lambda^1_{f, z_0}}\arrow{\delta_p}(\Z/p\Z)^{\lambda^0_{f, z_0}}\rightarrow \widetilde H^{n}(F_{f,\0};\Z/p\Z)\rightarrow 0,\eqno{(\ddagger)}$$
 and so, 
 $$\tilde b_{n-1}+\tau_p\leq \lambda^1_{f, z_0} \hskip .4in \textnormal{ and } \hskip 0.4in  \tilde b_{n}+\tau_p\leq \lambda^0_{f, z_0}.$$ 
 \end{thm}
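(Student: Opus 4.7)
The plan is to realize the Milnor fiber $F_{f, \0}$ as a CW complex whose top two cell groups are governed by the L\^e numbers, and then read off the exact sequence $(\dagger)$ as the top portion of its cellular cochain complex. The $\Z/p\Z$ statement will then follow because the cellular construction is natural in coefficients.

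First I would show that $\widetilde H^i(F_{f, \0}; \Z) = 0$ outside degrees $n-1$ and $n$. This is a standard perversity statement: because $\dim_\0 \Sigma f = 1$, the shifted vanishing cycle complex $\phi_f\Z_\U[n]$ is a perverse sheaf with $1$-dimensional support, so its stalk cohomology at $\0$ is concentrated in two adjacent degrees. Combined with the known Euler-characteristic formula for the Milnor fiber in terms of L\^e numbers (Chapter 3 of \cite{lecycles}), this immediately yields the third assertion $\tilde b_{n} - \tilde b_{n-1} = \lambda^0_{f, z_0} - \lambda^1_{f, z_0}$, so the content of the theorem is really the existence of the two-term complex realizing the top cohomology.

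The core geometric step is to analyze the restriction $z_0 \colon F_{f, \0} \to D$ onto a small disk. A generic fiber is, via the Milnor fibration of $f|_{V(z_0 - c)}$, homotopy equivalent to a wedge of $(n-1)$-spheres: $\lambda^1_{f, z_0} = \sum_C (C\cdot V(z_0))_\0 {\stackrel{\circ}{\mu}}_C$ of them come from the points of $\Sigma f \cap V(z_0-c)$ (``transversal'' or L\^e-cycle contributions), and the remaining ones come from the polar points $\Gamma^1_{f, z_0} \cap V(z_0-c)$. The Morse-type critical points of $z_0|_{F_{f, \0}}$ itself lie on the polar curve $\Gamma^1_{f, z_0}$, and each one contributes a Picard-Lefschetz $n$-cell (thimble). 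A careful bookkeeping using L\^e's ``swing'' shows that the polar $(n-1)$-spheres in the generic fiber cancel against the polar thimbles, leaving an effective cellular cochain complex $\Z^{\lambda^1_{f, z_0}}\arrow{\delta}\Z^{\lambda^0_{f, z_0}}$ whose kernel is $\widetilde H^{n-1}(F_{f, \0}; \Z)$ and whose cokernel is $\widetilde H^{n}(F_{f, \0}; \Z)$. This gives $(\dagger)$, and the two inequalities follow immediately.

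For the $\Z/p\Z$ version $(\ddagger)$, the cellular model is defined over $\Z$, so tensoring with $\Z/p\Z$ preserves both the two-term structure and the identifications of its kernel and cokernel with $\widetilde H^{*}(F_{f, \0}; \Z/p\Z)$; the bounds on $\tilde b_* + \tau_p$ are then the universal-coefficient identities already recorded in the section. The main obstacle, which is precisely the content of Chapter 10 of \cite{lecycles}, is the bookkeeping in the cellular construction: one must verify that the attaching maps of the Picard-Lefschetz thimbles really do cancel the polar $(n-1)$-cells of the generic fiber on the nose, leaving a clean two-term complex indexed by $\lambda^1$ and $\lambda^0$. This requires tracking monodromy along $\Sigma f$ via the swing construction, but once that is in hand the coefficient change to $\Z/p\Z$ is automatic.
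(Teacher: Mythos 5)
The paper does not prove this theorem; it simply cites Corollary 10.10 of \cite{lecycles}, and your sketch follows the same strategy as that reference: build $F_{f,\0}$ from the Milnor fiber of the hyperplane slice by attaching $n$-cells governed by the polar curve, then cancel the polar contributions (via L\^e's swing) to leave the two-term complex $\Z^{\lambda^1_{f,z_0}}\to\Z^{\lambda^0_{f,z_0}}$, with the $\Z/p\Z$ case following from the integral cellular model. Your outline is correct and consistent with the cited proof, though like the paper it defers the essential cancellation bookkeeping to \cite{lecycles}.
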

 
\medskip

\section{Definition of the beta invariant and examples}

\begin{defn}\label{def:beta} We define the {\bf beta invariant}:
$$
\beta_f \ = \ \beta_{f, z_0} \ := \ \left(\Gamma^1_{f, z_0} \cdot V\left(\frac{\partial f}{\partial z_0}\right)\right)_\0 - \sum_C{\stackrel{\circ}{\mu}}_C\left[\big(C\cdot V(z_0)\big)_\0-1\right] \ =$$

$$\lambda^0_{f, z_0}-\lambda^1_{f, z_0}+\sigma_f \ = \  =  \ \tilde b_{n}-\tilde b_{n-1} + \sigma_f.$$
\end{defn}

\medskip

\begin{rem} Note that the final expression above does not depend on $z_0$. Thus, the value of $\beta_{f,z_0}$ is independent of the linear form $z_0$ (provided that $f$, restricted to where the linear form is zero, has an isolated critical point). Consequently, we may drop the $z_0$ from the notation, but sometimes include it to indicate what linear form will actually be used in the calculation of $\beta_f$.
\end{rem}

\medskip

\begin{exm}\label{exm:smoothcomp} Suppose that all of the components $C$ of $\Sigma f$ are smooth and transversely intersected by $V(z_0)$ at $\0$. Then,
$$
\beta_{f, z_0} \ = \ \left(\Gamma^1_{f, z_0} \cdot V\left(\frac{\partial f}{\partial z_0}\right)\right)_\0  \ = \ \lambda^0_{f,z_0}.
$$
\end{exm}

\bigskip

Thus, the only time that $\beta_{f, z_0}$ is really a ``new'' invariant is when the critical locus itself has a singular component.

\begin{exm}
Suppose $f=z^2+(y^2-x^3)^d$, where $d\geq 2$. Both $f_{|_{V(x)}}$ and $f_{|_{V(y)}}$ have isolated critical points at the origin. We will calculate both $\beta_{f,x}$ and $\beta_{f,y}$, and see that they are the same.

First, we find that, as sets,
$$
\Sigma f  \ = \ V\left(\frac{\partial f}{\partial x}, \frac{\partial f}{\partial y}, \frac{\partial f}{\partial z}\right) \ = $$
$$
V\big(d(y^2-x^3)^{d-1}(-3x^2), \  d(y^2-x^3)^{d-1}2y, \ 2z\big) \ = \ V(y^2-x^3, z).
$$

Now, as cycles, we calculate 
$$
V\left(\frac{\partial f}{\partial y}, \frac{\partial f}{\partial z}\right) \ = \ V\big(d(y^2-x^3)^{d-1}2y,  \ 2z\big) \ =
$$
$$
V(y,z)+ (d-1)V(y^2-x^3, z)  \ = \ \Gamma^1_{f, x} + \Lambda^1_{f, x}.
$$
Thus, we have $\Gamma^1_{f, x}=V(y,z)$, and that $\Sigma f$ consists of the single component $C=V(y^2-x^3, z)$, with ${\stackrel{\circ}{\mu}}_C=d-1$. Therefore,
$$
\beta_{f,x} \ = \  \left(\Gamma^1_{f, x} \cdot V\left(\frac{\partial f}{\partial x}\right)\right)_\0 - \sum_C{\stackrel{\circ}{\mu}}_C\left[\big(C\cdot V(x)\big)_\0-1\right] \ = 
$$
\smallskip
$$\left(V(y,z)\cdot V(d(y^2-x^3)^{d-1}(-3x^2))\right)_\0 \ -(d-1)(V(y^2-x^3, z)\cdot V(x))_\0 + (d-1) \ = \ 
$$
\smallskip
$$
\left(V(y,z)\cdot V((y^2-x^3)^{d-1})\right)_\0 \ + \ \left(V(y,z)\cdot V(x^2))\right)_\0 \ -2(d-1)+(d-1) \ =
$$
$$
3(d-1)+2-(d-1) \ = \ 2d.
$$

\bigskip

To calculate $\beta_{f,y}$, we proceed similarly.

\smallskip

As cycles, we calculate 
$$
V\left(\frac{\partial f}{\partial x}, \frac{\partial f}{\partial z}\right) \ = \ V\big(d(y^2-x^3)^{d-1}(-3x^2),  \ 2z\big) \ = 
$$
$$
2V(x,z)+ (d-1)V(y^2-x^3, z)  \ = \ \Gamma^1_{f, y} + \Lambda^1_{f, y}.
$$

\noindent Thus, we have $\Gamma^1_{f, y}=2V(x,z)$, and, of course, that $\Sigma f$ consists of the single component $C=V(y^2-x^3, z)$, with ${\stackrel{\circ}{\mu}}_C=d-1$. Therefore,
$$
\beta_{f,y} \ = \  \left(\Gamma^1_{f, y} \cdot V\left(\frac{\partial f}{\partial y}\right)\right)_\0 - \sum_C{\stackrel{\circ}{\mu}}_C\left[\big(C\cdot V(y)\big)_\0-1\right] \ = 
$$
\smallskip
$$\left(2V(x,z)\cdot V(d(y^2-x^3)^{d-1}(2y))\right)_\0 \ -(d-1)(V(y^2-x^3, z)\cdot V(y))_\0 + (d-1) \ = \ 
$$
\smallskip
$$
\left(2V(x,z)\cdot V((y^2-x^3)^{d-1})\right)_\0 \ + \ \left(2V(x,z)\cdot V(y))\right)_\0 \ -3(d-1)+(d-1) \ = 
$$
$$
4(d-1)+2-2(d-1) \ = \ 2d.
$$

\medskip

As promised, we see that $\beta_{f,x}=\beta_{f,y}$, even though the separate terms in the calculation are different.

\end{exm}

\medskip

\section{Invariance} 

In this short section, we prove the topological invariance of $\beta_f$ and $\sigma_f$. 

\begin{thm}\label{thm:betainv} If  \hbox{$f:(\U, \0)\rightarrow (\C, 0)$}  and  \hbox{$g:(\U, \0)\rightarrow (\C, 0)$} are reduced with $1$-dimensional critical loci at the origin, and $V(f)$ and $V(g)$ have the same local ambient topological-type at $\0$, then $\sigma_f=\sigma_g$ and $\beta_f=\beta_g$.
\end{thm}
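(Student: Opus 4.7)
The plan is to exploit the third expression for $\beta_f$ from \defref{def:beta}, namely $\beta_f = \tilde b_n - \tilde b_{n-1} + \sigma_f$, which expresses $\beta_f$ in terms of Betti numbers of the Milnor fiber and the quantity $\sigma_f$. The Betti numbers $\tilde b_{n-1}$ and $\tilde b_n$ of $F_{f,\0}$ are classical invariants of the ambient topological type of $V(f)$ at $\0$ (this follows from the topological description of the Milnor fibration, e.g.\ via Milnor's tube construction, which depends only on the embedded topology of $V(f)\subset \U$). Consequently, once we prove $\sigma_f=\sigma_g$, the equality $\beta_f=\beta_g$ will follow immediately, and the entire result reduces to the topological invariance of $\sigma_f$.

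To show $\sigma_f = \sigma_g$, let $h\colon(\U, V(f),\0)\to(\U, V(g),\0)$ be a germ homeomorphism of pairs. Since $f$ and $g$ are reduced, the critical locus coincides set-theoretically with the singular set of the hypersurface, so $h$ carries $\Sigma f$ onto $\Sigma g$. After possibly shrinking $\U$, each $\Sigma f$ and $\Sigma g$ is a union of finitely many topological arcs meeting only at $\0$, so $h$ induces a bijection between the irreducible components of $\Sigma f$ at $\0$ and those of $\Sigma g$ at $\0$. In particular, the number of components $c_f$ equals $c_g$.

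For a corresponding pair of components $C\mapsto C'$, pick a generic point $\mathbf p\in C-\{\0\}$ close to $\0$, and set $\mathbf p' := h(\mathbf p)\in C' - \{\0\}$ (still generic, after refining $h$ if necessary). By the standing hypothesis recalled in the introduction, $\widetilde H^*(F_{f,\mathbf p};\Z)$ is concentrated in degree $n-1$ with rank $\stackrel{\circ}{\mu}_C$, and likewise for $g$ with rank $\stackrel{\circ}{\mu}_{C'}$. Since the Milnor fiber at a point is a topological invariant of the germ of the hypersurface at that point, and the germs $(V(f),\mathbf p)$ and $(V(g),\mathbf p')$ are homeomorphic via $h$, we conclude $\stackrel{\circ}{\mu}_C=\stackrel{\circ}{\mu}_{C'}$. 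Summing over components yields $\sigma_f=\sigma_g$, and combined with the topological invariance of $\tilde b_{n-1}$ and $\tilde b_n$ we obtain $\beta_f = \beta_g$.

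The main obstacle, modest as it is, is making precise the assertion that each transverse Milnor number $\stackrel{\circ}{\mu}_C$ is actually a topological (not merely analytic) invariant of the hypersurface germ at a generic point of $C$. This is essentially the statement that the Milnor fibration at an isolated hypersurface singularity depends only on the local embedded topological type, which is classical; one may cite the topological Milnor fibration (via Milnor's book or Lê's treatment) to justify it and avoid any appeal to Lê--Ramanujam or other delicate equisingularity results.
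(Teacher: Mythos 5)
Your proposal is correct and takes essentially the same route as the paper: both reduce the claim to the topological invariance of $\sigma_f$ via the expression $\beta_f=\tilde b_{n}-\tilde b_{n-1}+\sigma_f$ together with L\^e's theorem that, for reduced $f$, the Milnor fiber is determined by the local ambient topological type, and both then match the components of $\Sigma f$ and $\Sigma g$ under the homeomorphism and identify each ${\stackrel{\circ}{\mu}}_C$ through the local Milnor fiber of $f$ at a nearby point of $C$ (a bouquet of ${\stackrel{\circ}{\mu}}_C$ spheres of dimension $n-1$).
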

\begin{proof} As L\^e proved in \cite{leattach} and \cite{topsing}, the homotopy-type of the Milnor fiber is an invariant of the ambient topological-type for reduced functions $f$; thus, the topological invariance of $\beta_f $ would follow from the topological invariance of $\sum_C{\stackrel{\circ}{\mu}}_C$. However, this latter topological invariance is easy to establish. 

The singular set of $V(f)$ must map to the singular set under an ambient homeomorphism and, as we require the origin to map to the origin, the punctured singular set $\Sigma V(f)-\{\0\}$ must map to the punctured singular set, and so the components of $\Sigma f$ at the origin must map bijectively to the components of the singular set at the origin. Now the homotopy-type of the Milnor fiber of $f$ at a point $\mbf p\in \Sigma V(f)$ near $\0$ is invariant under an ambient homeomorphism, and this homotopy-type is that of a bouquet of ${\stackrel{\circ}{\mu}}_C$ $(n-1)$-spheres, where $C$ is the component of $\Sigma V(f)$ containing $\mbf p$.
\end{proof}

\medskip

\section{Non-negativity and Milnor fiber consequences}

In this section, we first need to review a number of known results, and establish some notation.

\bigskip

Recall that our choice of $\U$ implies that, for each irreducible component $C$ of $\Sigma f$, $C-\{\0\}$ is topologically a punctured disk and so, is homotopy-equivalent to a circle. There is an ``internal'' (also known as ``vertical'') monodromy action, $h_C$, on $\Z^{{\stackrel{\circ}{\mu}}_C}$ given by traveling once around this circle. This is the monodromy of the local system obtained by considering the complex of sheaves of vanishing cycles along $f$, and restricting to $C-\{\0\}$.

\smallskip

Now, a result of Siersma in \cite{siersmavarlad}, or an easy exercise using perverse sheaves (see the remark at the end of \cite{siersmavarlad}) tells us that
\begin{thm}\label{thm:incl}\textnormal{(Siersma)}  There is an inclusion
$$
\widetilde H^{n-1}(F_{f,\0};\Z)\hookrightarrow \bigoplus_{C}\operatorname{ker}\{\operatorname{id}-h_C\},
$$
which commutes with the monodromy action on the vanishing cycles along $f$.

In particular,
$$\rank\widetilde H^{n-1}(F_{f,\0};\Z)\leq \sigma_f,$$
and equality implies that each $h_C$ is the identity.

\medskip

Furthermore,  this result holds with $\Z/p\Z$ coefficients, where the proof remains identical. Thus,
$$\dim \widetilde H^{n-1}(F_{f,\0};\Z/p\Z)\leq \sigma_f.$$
\end{thm}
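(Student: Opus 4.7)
The plan is to extract the inclusion from the long exact sequence of the open/closed attaching triangle applied to the perverse sheaf of vanishing cycles. Write $P^\bullet$ for the shifted vanishing cycles complex $\phi_f^\bullet\Z_\U^\bullet[n]$, viewed as a perverse sheaf on $V(f)$ supported on $\Sigma f$; its stalk cohomology at $\0$ recovers the reduced Milnor fiber cohomology as $\mathcal H^k(P^\bullet)_\0 = \widetilde H^{n+k}(F_{f,\0};\Z)$, so in particular $\mathcal H^{-1}(P^\bullet)_\0 = \widetilde H^{n-1}(F_{f,\0};\Z)$. Since $\Sigma f-\{\0\}$ is smooth of pure dimension one, the restriction of $P^\bullet$ to it is of the form $\call[1]$ for a local system $\call$; the standing hypotheses on $\U$ identify the stalk of $\call$ on each punctured component $C-\{\0\}$ with $\Z^{{\stackrel{\circ}{\mu}}_C}$, on which a generator of $\pi_1$ acts by the internal monodromy $h_C$.

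Letting $j\colon \Sigma f-\{\0\}\hookrightarrow \Sigma f$ and $i\colon\{\0\}\hookrightarrow \Sigma f$, I would apply the attaching distinguished triangle
$$
i_* i^! P^\bullet \ \longrightarrow \ P^\bullet \ \longrightarrow \ Rj_* j^* P^\bullet \ \stackrel{+1}{\longrightarrow}
$$
and take the cohomology long exact sequence of stalks at $\0$. For a small ball $B$ about $\0$, the intersection $\Sigma f\cap B-\{\0\}$ is a disjoint union of punctured disks, one per component $C$, each homotopy-equivalent to $S^1$ with local system realizing $h_C$, so a direct calculation yields
$$
\mathcal H^{-1}\bigl((Rj_* j^* P^\bullet)_\0\bigr) \ = \ \bigoplus_C H^0(S^1;\call_C) \ = \ \bigoplus_C \ker\{\operatorname{id}-h_C\}.
$$
Perversity then supplies the crucial vanishing: the middle-perversity cosupport condition applied to the closed point $\{\0\}$ forces $\mathcal H^k(i^! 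P^\bullet)=0$ for all $k<0$, so in particular $\mathcal H^{-1}(i^! P^\bullet)_\0=0$. The long exact sequence therefore produces the desired injection
$$
\widetilde H^{n-1}(F_{f,\0};\Z) \ = \ \mathcal H^{-1}(P^\bullet)_\0 \ \hookrightarrow \ \bigoplus_C \ker\{\operatorname{id}-h_C\},
$$
and naturality of the whole attaching triangle with respect to the canonical monodromy automorphism of $\phi_f^\bullet$ makes the inclusion equivariant for the monodromy action on vanishing cycles.

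The rank bound $\tilde b_{n-1}\leq \sigma_f$ follows by summing the trivial estimates $\rank\ker\{\operatorname{id}-h_C\}\leq{\stackrel{\circ}{\mu}}_C$; equality forces $\ker\{\operatorname{id}-h_C\}=\Z^{{\stackrel{\circ}{\mu}}_C}$ on every component, and since $h_C$ acts on a free $\Z$-module this forces $h_C=\operatorname{id}$. The $\Z/p\Z$ statement is obtained by running exactly the same argument with $(\Z/p\Z)_\U$ in place of $\Z_\U$: perverse sheaves, the attaching triangle, and the computation of $H^0(S^1;-)$ all go through verbatim over $\Z/p\Z$.

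The main obstacle I expect is nothing conceptual but purely bookkeeping: pinning down the shift conventions so that $P^\bullet$ is perverse with the right stalk identification, and double-checking that the perverse cosupport bound at the $0$-dimensional stratum $\{\0\}$ really delivers $\mathcal H^{-1}(i^!P^\bullet)_\0=0$ rather than some weaker vanishing. Once the conventions are fixed, the structural ingredients (perversity of vanishing cycles, naturality of the attaching triangle, and the cohomology of a local system on a circle) carry the proof.
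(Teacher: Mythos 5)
Your argument is correct and is precisely the ``easy exercise using perverse sheaves'' that the paper invokes in lieu of a written proof: the attaching triangle $i_*i^!P^\bullet\to P^\bullet\to Rj_*j^*P^\bullet\stackrel{+1}{\to}$ for the (correctly shifted) perverse vanishing-cycle complex, the cosupport vanishing $\mathcal H^{-1}(i^!P^\bullet)_\0=0$ at the point stratum, and the identification of $\mathcal H^{-1}(Rj_*j^*P^\bullet)_\0$ with $\bigoplus_C H^0(S^1;\call_C)=\bigoplus_C\ker\{\operatorname{id}-h_C\}$. The deduction that equality of ranks forces $h_C=\operatorname{id}$ (via torsion-freeness of $\Z^{{\stackrel{\circ}{\mu}}_C}$) and the verbatim transfer to $\Z/p\Z$ coefficients are also as intended, so your write-up simply supplies the details the paper leaves to Siersma's remark.
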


\bigskip 

Thus, we conclude immediately that:

\begin{thm}\label{thm:betti} For all primes $p$,
$$\tilde b_{n-1}+\,\tau_p \ \leq \ \sigma_f \ \textnormal{ and } \ \tilde b_{n}+\,\tau_p \ \leq \ \beta_f.$$
In particular, $0\leq \tilde b_{n}\leq \beta_f$.
\end{thm}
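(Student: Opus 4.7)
The plan is to derive both inequalities directly from Theorem \ref{thm:le} (the Lê number exact sequence with $\Z$ and $\Z/p\Z$ coefficients) and Theorem \ref{thm:incl} (Siersma's inclusion), via the Universal Coefficient Theorem. Essentially no new input is required; the theorem packages these two tools together through the definition of $\beta_f$.

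First I would handle the inequality $\tilde b_{n-1}+\tau_p \leq \sigma_f$. The $\Z/p\Z$ version of Siersma's inclusion in Theorem \ref{thm:incl} gives $\dim \widetilde H^{n-1}(F_{f,\0};\Z/p\Z)\leq \sigma_f$, and the Universal Coefficient Theorem identifies the left-hand side with $\tilde b_{n-1}+\tau_p$, as already recorded in Section 2 (the $(n-1)$-cohomology is free, so the only torsion contributing to the mod $p$ dimension comes from the $p$-torsion summands of $\widetilde H^n$). This is immediate.

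Next I would obtain $\tilde b_n +\tau_p \leq \beta_f$ by combining the first inequality with the numerical consequence of Theorem \ref{thm:le}, namely $\tilde b_n - \tilde b_{n-1}=\lambda^0_{f,z_0}-\lambda^1_{f,z_0}$. Adding $\tilde b_{n-1}+\tau_p$ to both sides yields
$$\tilde b_n +\tau_p \ = \ (\tilde b_{n-1}+\tau_p)+(\lambda^0_{f,z_0}-\lambda^1_{f,z_0}) \ \leq \ \sigma_f +\lambda^0_{f,z_0}-\lambda^1_{f,z_0} \ = \ \beta_f,$$
where the last equality is the definition of $\beta_f$. Finally, $\tilde b_n \geq 0$ is automatic as a rank, and $\tau_p \geq 0$ for every prime, so $0\leq \tilde b_n \leq \tilde b_n +\tau_p\leq \beta_f$.

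Since every step is a direct invocation of an already stated result or a trivial arithmetic manipulation, there is no real obstacle. The only point requiring any care is the bookkeeping of which cohomological degree contributes $\tau_p$ in the Universal Coefficient Theorem, but this is already spelled out in the notation section, so no further work is needed.
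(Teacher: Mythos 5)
Your proposal is correct and matches the paper's (implicit) argument exactly: the paper states \thmref{thm:betti} as an immediate consequence of the $\Z/p\Z$ version of Siersma's inequality in \thmref{thm:incl} together with the identity $\beta_f=\tilde b_n-\tilde b_{n-1}+\sigma_f$ from \defref{def:beta} and \thmref{thm:le}. Nothing further is needed.
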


\bigskip

Recall that $c_f$ denotes the number of irreducible components of $\Sigma f$.

\begin{prop}\label{prop:trace} If $\displaystyle\rank\widetilde H^{n-1}(F_{f,\0};\Z) = \sigma_f$,
then the trace of the Milnor monodromy of $f$ on $\widetilde H^{n-1}(F_{f,\0};\Z)$ is $(-1)^nc_f$.
\end{prop}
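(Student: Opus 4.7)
The plan is to combine \thmref{thm:incl} with the Lefschetz number of the Milnor monodromy at a generic smooth point of $\Sigma f$. First, by \thmref{thm:incl}, the hypothesis $\rank\widetilde H^{n-1}(F_{f,\0};\Z)=\sigma_f$ forces $h_C=\operatorname{id}$ for every irreducible component $C$ of $\Sigma f$. Consequently $\ker\{\operatorname{id}-h_C\}=\Z^{{\stackrel{\circ}{\mu}}_C}$, and Siersma's embedding becomes
$$
\Phi\,:\,\widetilde H^{n-1}(F_{f,\0};\Z)\ \hookrightarrow\ \bigoplus_C\Z^{{\stackrel{\circ}{\mu}}_C}\ =\ \bigoplus_C\widetilde H^{n-1}(F_{f,\mbf p_C};\Z),
$$
where $\mbf p_C\in C-\{\0\}$ is a point close to $\0$. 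Since both sides are free abelian of rank $\sigma_f$, the map $\Phi\otimes\Q$ is an isomorphism. The equivariance clause in \thmref{thm:incl} says that $\Phi$ commutes with the Milnor monodromy on the vanishing cycles of $f$; at the stalk over $\mbf p_C$, that monodromy is simply $T_{\mbf p_C}$, the local Milnor monodromy of $f$ at $\mbf p_C$. Hence $\Phi\otimes\Q$ conjugates $T_\0$ to $\bigoplus_C T_{\mbf p_C}$, and in particular
$$
\operatorname{tr}\bigl(T_\0\,\big|\,\widetilde H^{n-1}(F_{f,\0};\Q)\bigr)\ =\ \sum_C\operatorname{tr}\bigl(T_{\mbf p_C}\,\big|\,\widetilde H^{n-1}(F_{f,\mbf p_C};\Q)\bigr).
$$

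Next, I would compute each summand. Because $\mbf p_C$ is a smooth point of the curve $\Sigma f$, an analytic change of coordinates makes $f$ independent of a transverse coordinate in a neighborhood of $\mbf p_C$, so $F_{f,\mbf p_C}$ is homotopy equivalent to the Milnor fiber of the transverse slice — a bouquet of ${\stackrel{\circ}{\mu}}_C$ copies of $S^{n-1}$ — and $T_{\mbf p_C}$ is identified with the Milnor monodromy of this isolated-singularity slice. Applying A'Campo's Lefschetz number formula to that isolated critical point gives
$$
0\ =\ 1+(-1)^{n-1}\operatorname{tr}(T_{\mbf p_C}),
$$
so $\operatorname{tr}(T_{\mbf p_C})=(-1)^n$. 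Summing over the $c_f$ components of $\Sigma f$ yields the desired value $(-1)^n c_f$.

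The main obstacle is the monodromy equivariance used in the first paragraph: one must verify that the abstract equivariance of $\Phi$ in \thmref{thm:incl} really does implement the specific identification of $T_\0$ with $\bigoplus_C T_{\mbf p_C}$, with the right signs and shifts. Tracing through the sheaf-theoretic construction of $\Phi$ via the perverse sheaf $\phi_f\Z_\U[n]$ (as indicated by the remark at the end of \cite{siersmavarlad}) should reduce this to a bookkeeping check rather than require any new geometric input.
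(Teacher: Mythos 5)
Your proof is correct and follows essentially the same route as the paper: use Theorem~\ref{thm:incl} to see that the rank hypothesis forces each $h_C=\operatorname{id}$ and turns Siersma's inclusion into a monodromy-equivariant rational isomorphism onto $\bigoplus_C\Z^{{\stackrel{\circ}{\mu}}_C}$, then apply A'Campo's vanishing of the Lefschetz number at each point $\mbf p_C\in C-\{\0\}$ to get trace $(-1)^n$ per component. The paper's proof is just a terser version of the same argument.
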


\begin{proof} Under the assumption, \thmref{thm:incl} tells us that the trace of the Milnor monodromy on $\widetilde H^{n-1}(F_{f,\0};\Z)$ is the sum of the traces of the Milnor monodromy on each $\operatorname{ker}\{\operatorname{id}-h_C\}$. As $h_C$ is the identity, this is simply the Milnor monodromy of $f$ at a point $\mbf p$ on $C-\{\0\}$ near the origin. By A'Campo's result in \cite{acamp}, this is $(-1)^n$.
\end{proof}
\bigskip

\bigskip

The case where $\beta_f=0$ is extremely restrictive.

\begin{thm}\label{thm:betaequiv} The following are equivalent:
\begin{enumerate}
\item $\beta_f=0$; and

\smallskip

\item $\widetilde H^{n}(F_{f,\0};\Z)=0$, and $\widetilde H^{n-1}(F_{f,\0};\Z) \cong\Z^{\sigma_f}$.
\end{enumerate}

\smallskip

In addition, when these equivalent conditions hold, $\Sigma f$ has a single irreducible component $C$ and the internal monodromy automorphism $h_C$ is the identity.
\end{thm}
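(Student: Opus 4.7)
\medskip

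\textbf{Proof plan for Theorem \ref{thm:betaequiv}.} The plan is to exploit the identity $\beta_f = \tilde b_n - \tilde b_{n-1} + \sigma_f$ from \defref{def:beta} together with the bounds supplied by \thmref{thm:betti}, and then to pin down the internal monodromies and the component count by combining \thmref{thm:incl}, \propref{prop:trace}, and A'Campo's Lefschetz-number computation.

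\medskip

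\emph{Equivalence of (1) and (2).} The implication (2) $\Rightarrow$ (1) is immediate: if $\widetilde H^{n}(F_{f,\0};\Z)=0$ and $\widetilde H^{n-1}(F_{f,\0};\Z)\cong \Z^{\sigma_f}$, then $\tilde b_n=0$, $\tilde b_{n-1}=\sigma_f$, and so $\beta_f = 0 - \sigma_f + \sigma_f = 0$. For (1) $\Rightarrow$ (2), I would first invoke \thmref{thm:betti}, which gives $\tilde b_n + \tau_p \leq \beta_f = 0$ for every prime $p$; this forces $\tilde b_n = 0$ and $\tau_p = 0$ for all $p$, so $\widetilde H^n(F_{f,\0};\Z)=0$. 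Plugging $\tilde b_n = 0$ back into $\beta_f = \tilde b_n - \tilde b_{n-1} + \sigma_f = 0$ yields $\tilde b_{n-1} = \sigma_f$; since $\widetilde H^{n-1}(F_{f,\0};\Z)$ is free Abelian, this is $\Z^{\sigma_f}$.

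\medskip

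\emph{Each $h_C$ is the identity.} Once we know $\widetilde H^{n-1}(F_{f,\0};\Z) \cong \Z^{\sigma_f}$, we apply \thmref{thm:incl}: the inclusion
$$\Z^{\sigma_f} \ \cong \ \widetilde H^{n-1}(F_{f,\0};\Z) \ \hookrightarrow \ \bigoplus_C \ker\{\operatorname{id} - h_C\}.$$
Since each summand on the right sits inside $\Z^{\stackrel{\circ}{\mu}_C}$ and $\sum_C \stackrel{\circ}{\mu}_C = \sigma_f$, a rank count forces $\ker\{\operatorname{id} - h_C\} = \Z^{\stackrel{\circ}{\mu}_C}$ for every $C$, i.e., each $h_C$ is the identity.

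\medskip

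\emph{There is only one component.} This is the step I expect to be the main obstacle, since the algebraic hypothesis does not obviously constrain the \emph{number} of components. The idea is to play off two trace computations against each other. On the one hand, the hypothesis $\operatorname{rank}\widetilde H^{n-1}(F_{f,\0};\Z) = \sigma_f$ puts us in the setting of \propref{prop:trace}, so the trace of the Milnor monodromy $h$ of $f$ on $\widetilde H^{n-1}(F_{f,\0};\Z)$ equals $(-1)^n c_f$. On the other hand, because $f$ has a critical point at $\0$, A'Campo's theorem in \cite{acamp} says the Lefschetz number of $h$ on $F_{f,\0}$ is $0$; since $\widetilde H^*(F_{f,\0};\Z)$ is concentrated in degrees $n-1$ and $n$ (using that $f$ has a $1$-dimensional critical locus at $\0$, so $F_{f,\0}$ is $(n-2)$-connected), and since $\widetilde H^n(F_{f,\0};\Z)=0$ under our hypothesis, this Lefschetz number reduces to
$$0 \ = \ 1 + (-1)^{n-1}\operatorname{tr}\bigl(h^*\bigm|\widetilde H^{n-1}(F_{f,\0};\Z)\bigr),$$
forcing the trace to equal $(-1)^n$. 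Comparing this with the value $(-1)^n c_f$ from \propref{prop:trace} gives $c_f = 1$, completing the proof.
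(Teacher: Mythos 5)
Your proposal is correct and follows essentially the same route as the paper: (2)\,$\Rightarrow$\,(1) from the identity $\beta_f=\tilde b_n-\tilde b_{n-1}+\sigma_f$, (1)\,$\Rightarrow$\,(2) from \thmref{thm:betti}, the identity of each $h_C$ from the equality clause of \thmref{thm:incl}, and $c_f=1$ by comparing \propref{prop:trace} with A'Campo's vanishing of the Lefschetz number. The paper's proof is just a terser version of the same argument, so no further comment is needed.
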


\begin{proof} That (2) implies (1) follows immediately from \thmref{thm:le}.  That (1) implies (2) follows immediately from \thmref{thm:betti}. That $c_f=1$ follows from \propref{prop:trace} and A'Campo's Theorem applied to the cohomology of $F_{f,\0}$. That the internal monodromy automorphism $h_C$ is the identity is part of \thmref{thm:incl}.
 \end{proof}
 
 \medskip
 
 \begin{rem} The statement in \thmref{thm:betaequiv} that $\Sigma f$ must have a single irreducible component at the origin is a ``non-splitting result'', of the flavor of the result proved independently by \cite{leacampo}, Lazzeri in \cite{lazzerimono}, and Gabrielov in \cite{gabrielov}; however, those three works use the cohomology of Milnor fibers of $f$ restricted to hyperplane slices, rather than looking at the cohomology of the Milnor fiber of $f$ itself.
 \end{rem}
 
 \bigskip
 
 The case where $\beta_f=1$ is also interesting to consider:
 
 \begin{thm}\label{thm:beta2} Suppose that $\beta_f=1$. Then, $c_f=2$, and either
 \begin{enumerate}
 \item $\tilde b_{n}=0$, $\tilde b_{n-1}=\sigma_f-1$ and, for all primes $p$, $\widetilde H^{n}(F_{f,\0};\Z)$ has at most one direct summand with $p$-torsion; or
 
 \smallskip
 
 \item $\tilde b_{n}=1$, $\tilde b_{n-1}=\sigma_f$, and $\widetilde H^{n}(F_{f,\0};\Z)$ has no torsion (and so is isomorphic to $\Z$).
 \end{enumerate}
 \end{thm}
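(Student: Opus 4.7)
The plan is to read off the dichotomy from $\beta_f = 1$ directly, and then establish $c_f = 2$ via a trace-and-Lefschetz analysis that splits on $\tilde b_n$. The identity $\beta_f = \tilde b_n - \tilde b_{n-1} + \sigma_f$ of Definition \ref{def:beta}, combined with the hypothesis $\beta_f = 1$, gives $\tilde b_{n-1} = \tilde b_n + \sigma_f - 1$, while Theorem \ref{thm:betti} yields $\tilde b_n + \tau_p \leq 1$ for every prime $p$. So $\tilde b_n \in \{0,1\}$: taking $\tilde b_n = 0$ gives case $(1)$ with each $\tau_p \leq 1$; taking $\tilde b_n = 1$ forces $\tau_p = 0$ for all $p$, so by the universal coefficient theorem $\widetilde H^n(F_{f,\0};\Z)$ is torsion-free of rank $1$, i.e., isomorphic to $\Z$, placing us in case $(2)$. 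Only the equality $c_f = 2$ then remains.

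In Case $(2)$, since $\tilde b_{n-1} = \sigma_f$, Proposition \ref{prop:trace} gives $\operatorname{tr}(h \mid \widetilde H^{n-1}(F_{f,\0};\Z)) = (-1)^n c_f$, and $\widetilde H^n(F_{f,\0};\Z) \cong \Z$ forces $h$ to act there as some $\epsilon \in \{\pm 1\}$. A'Campo's theorem that the Lefschetz number vanishes for non-isolated singularities then gives
\[
0 \;=\; L(h) \;=\; 1 + (-1)^{n-1}(-1)^n c_f + (-1)^n \epsilon \;=\; 1 - c_f + (-1)^n \epsilon,
\]
so $c_f = 1 + (-1)^n \epsilon \in \{0,2\}$; since $c_f \geq 1$, I conclude $c_f = 2$.

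Case $(1)$ is the main obstacle, because $\tilde b_{n-1} = \sigma_f - 1$ blocks a direct use of Proposition \ref{prop:trace}. My plan is to tensor Siersma's inclusion from Theorem \ref{thm:incl} with $\Q$, obtaining an $h$-equivariant injection $\iota \colon \widetilde H^{n-1}(F_{f,\0};\Q) \hookrightarrow \bigoplus_C \ker_\Q(\operatorname{id} - h_C)$. The inequality $\sum_C \dim_\Q \ker_\Q(\operatorname{id} - h_C) \leq \sigma_f$, with equality iff each $h_C = \operatorname{id}$ rationally, forces one of two configurations: either $\iota$ is a $\Q$-isomorphism and exactly one component $C_0$ satisfies $\dim_\Q \ker_\Q(\operatorname{id} - h_{C_0}) = \mu_{C_0} - 1$, or every $h_C$ is rationally trivial and $\iota$ has $1$-dimensional cokernel $Q$. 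In each configuration $h$ acts by a scalar $\lambda$ on a distinguished $1$-dimensional $\Q$-quotient (either $\Q^{\mu_{C_0}}/\ker_\Q(\operatorname{id} - h_{C_0})$ or $Q$). The rational Lefschetz identity (A'Campo at $\0$), together with $\tilde b_n = 0$, gives $\operatorname{tr}_\Q(h \mid \widetilde H^{n-1}) = (-1)^n$; combined with the transverse-slice identity $\operatorname{tr}(h \mid \Q^{\mu_C}) = (-1)^n$ (A'Campo at a nearby point of each $C$), a short trace computation in either configuration yields $\lambda = (-1)^n (c_f - 1)$. Finally, the Monodromy Theorem ensures $h$ is quasi-unipotent on each $\Q^{\mu_C}$, so every eigenvalue of $h$ on $\bigoplus_C \Q^{\mu_C}$, and on any $h$-invariant subquotient, is a root of unity; the rational scalar $\lambda$ therefore lies in $\{\pm 1\}$, forcing $(-1)^n(c_f - 1) = \pm 1$ and hence $c_f = 2$.
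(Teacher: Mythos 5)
Your proposal is correct, and its overall architecture is the same as the paper's: extract the two cases from $\beta_f=\tilde b_n-\tilde b_{n-1}+\sigma_f=1$ together with the $p$-torsion bound of \thmref{thm:betti}, handle Case (2) by the Lefschetz/trace identity exactly as the paper does, and handle Case (1) by trace additivity over an $h$-equivariant inclusion of $\widetilde H^{n-1}(F_{f,\0})$ into the direct sum of the nearby stalks $\bigoplus_C\Z^{{\stackrel{\circ}{\mu}}_C}$. The only real divergence is the mechanism in Case (1): the paper works integrally with the restriction inclusion, observes the cokernel is $\Z\oplus T$ with $T$ torsion, and gets the trace $\pm1$ on the complement from the fact that an automorphism of $\Z$ is $\pm1$; you instead route through Siersma's kernels $\ker(\operatorname{id}-h_C)$, split into two configurations according to where the missing dimension sits, and pin the scalar $\lambda$ to $\pm1$ via quasi-unipotence (the Monodromy Theorem). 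Both work; your configuration split is actually unnecessary, since in either configuration the relevant object is just the $1$-dimensional $h$-invariant cokernel of $\widetilde H^{n-1}(F_{f,\0};\Q)\to\bigoplus_C\Q^{{\stackrel{\circ}{\mu}}_C}$, and the paper's integrality argument gives $\lambda=\pm1$ without invoking quasi-unipotence. The one hypothesis you use implicitly and should state is that the Milnor monodromy commutes with each vertical monodromy $h_C$ (so that it preserves $\ker(\operatorname{id}-h_C)$ and descends to the quotient); this is standard, being the same equivariance underlying \thmref{thm:incl} and \propref{prop:trace}.
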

 \begin{proof} By \thmref{thm:betti}, $\tilde b_{n}\leq \beta_f$, and we know that $\tilde b_{n}-\tilde b_{n-1}=\beta_f-\sigma_f=1-\sigma_f$. So we obtain the two cases to consider: (1) where $\tilde b_{n}=0$ and $\tilde b_{n-1}=\sigma_f-1$ and (2) where $\tilde b_{n}=1$ and $\tilde b_{n-1}=\sigma_f$. The conclusions about torsion in both cases follow from the $p$-torsion statement in \thmref{thm:betti}. All that remains for us to show is the claim about $c_f$ .
 
In case (2), by A'Campo's result, the Lefschetz number of the Milnor monodromy on $H^*(F_{f,\0};\Z)$ is zero. By \propref{prop:trace}, the trace of the monodromy of $f$ on $\widetilde H^{n-1}(F_{f,\0};\Z)$ is $(-1)^nc_f$. Since $\widetilde H^{n}(F_{f,\0};\Z)\cong \Z$, the trace of the monodromy of $f$  in degree $n$ is $\pm1$. Thus, we obtain that $1-c_f\pm1=0$. Hence, $c_f=0$ or $2$, but we are assuming that $f$ has a $1$-dimensional critical locus, so $c_f\neq 0$.

Case (1) is very similar. Since $\tilde b_{n}=0$, A'Campo's result tells us that  the trace of the monodromy of $f$ on $\widetilde H^{n-1}(F_{f,\0};\Z)$ is $(-1)^n$. On the other hand, restriction induces the inclusion 
$$
\widetilde H^{n-1}(F_{f,\0};\Z)\hookrightarrow \bigoplus_C \widetilde H^{n-1}(F_{f,\mbf p_C};\Z)\cong\Z^{\sigma_f},
$$
where $\mbf p_C$ denotes a point of $C-\{\0\}$ close to the origin. This inclusion is compatible with the $f$ monodromy action and, since $\tilde b_{n-1}=\sigma_f-1$, the cokernel is isomorphic to $\Z\oplus T$, where $T$ is pure torsion. The trace of the map induced by the $f$ monodromy on this cokernel is $\pm1$. Thus, from additivity of the traces, we obtain $(-1)^n = (-1)^nc_f\pm1$, and conclude once again that $c_f=2$.
 \end{proof}

\bigskip

As we saw in \exref{exm:smoothcomp}, if all of the components of $\Sigma f$ are smooth at $\0$, then, for generic $z_0$, $\beta_f=\lambda_{f,z_0}^0$; in this case, results on $\lambda_{f,z_0}^0$ imply even stronger results when $\beta_f=0$ or $1$.

For instance, the non-splitting result of L\^e -Lazzeri-Gabrielov immediately implies the first item below, while the main theorem of \cite{lemassey} immediately implies the second item.

\begin{prop} Suppose that all of the components of $\Sigma f$ are smooth at $\0$.

\begin{enumerate}
\item If $\beta_f=0$, then $\Sigma f$ has a unique (smooth) component $C$ at the origin, along which the Milnor number of a generic hyperplane slice is constant. In particular, $\widetilde H^n(F_{f,\0})=0$ and $\widetilde H^{n-1}(F_{f,\0})\cong \Z^{{\stackrel{\circ}{\mu}}_C}$
\item If $\beta_f=1$, then $\widetilde H^n(F_{f,\0})=0$ and $\widetilde H^{n-1}(F_{f,\0})\cong \Z^{\sigma_f -1}$.
\end{enumerate}
\end{prop}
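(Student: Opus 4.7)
The plan is to reduce both statements to facts about $\lambda^0_{f,z_0}$ by exploiting the smoothness hypothesis on the components of $\Sigma f$. First I would choose a linear form $z_0$ generic enough that $V(z_0)$ transversely intersects each (smooth) component $C$ of $\Sigma f$ at $\0$, so that $\big(C\cdot V(z_0)\big)_\0=1$ for every $C$. As observed in \exref{exm:smoothcomp}, this choice forces
$$\beta_f \ = \ \lambda^0_{f,z_0} \hskip 0.2in \textnormal{and} \hskip 0.2in \lambda^1_{f,z_0} \ = \ \sum_C {\stackrel{\circ}{\mu}}_C \ = \ \sigma_f.$$
By the classical Lê attaching formula, $\mu_\0\big(f_{|_{V(z_0)}}\big) \ = \ \lambda^0_{f,z_0}+\lambda^1_{f,z_0}$, so the hypotheses $\beta_f=0$ and $\beta_f=1$ become, respectively, $\lambda^0_{f,z_0}=0$ and $\lambda^0_{f,z_0}=1$.

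For (1), under $\beta_f=0$, I would observe that $\mu_\0\big(f_{|_{V(z_0)}}\big) \ = \ \lambda^1_{f,z_0} \ = \ \sum_C\big(C\cdot V(z_0)\big)_\0\cdot{\stackrel{\circ}{\mu}}_C$, which is exactly the splitting hypothesis of \thmref{thm:nonsplit}. The L\^e-Lazzeri-Gabrielov theorem then yields immediately that $\Sigma f$ consists of a single smooth component $C$ transversely met by $V(z_0)$, and that $\mu_\0\big(f_{|_{V(z_0)}}\big)={\stackrel{\circ}{\mu}}_C$. Since the generic hyperplane-slice Milnor number at any nearby point of $C$ is also ${\stackrel{\circ}{\mu}}_C$, upper semicontinuity forces this Milnor number to be constant along $C$. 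The cohomology conclusion is just \thmref{thm:betaequiv} applied with $c_f=1$, so that $\sigma_f={\stackrel{\circ}{\mu}}_C$.

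For (2), under $\beta_f=1$, \thmref{thm:beta2} already tells us that $c_f=2$ and narrows the Betti numbers and torsion to two alternatives. I would then invoke the main theorem of \cite{lemassey}, which treats precisely the $\lambda^0_{f,z_0}=1$ case for hypersurfaces whose critical components are smooth at $\0$; its conclusion pins down $\widetilde H^n(F_{f,\0})=0$ and $\widetilde H^{n-1}(F_{f,\0})\cong\Z^{\sigma_f-1}$, simultaneously ruling out case (2) of \thmref{thm:beta2} and killing the $p$-torsion allowed in case (1).

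The main obstacle is bookkeeping: one must verify that the precise hypotheses of \cite{lemassey} match our setting (smoothness of components at $\0$ together with $\lambda^0_{f,z_0}=1$ for generic $z_0$). Once this is confirmed, both parts are essentially immediate consequences of cited non-splitting/Milnor-fiber results combined with the dictionary $\beta_f=\lambda^0_{f,z_0}$ that the smoothness hypothesis provides; no further genuinely new argument appears to be required.
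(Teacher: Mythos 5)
Your proposal is correct and follows essentially the same route as the paper, which likewise disposes of item (1) by observing that $\beta_f=\lambda^0_{f,z_0}=0$ puts one in the splitting situation of \thmref{thm:nonsplit}, and of item (2) by citing the main theorem of \cite{lemassey} for the case $\lambda^0_{f,z_0}=1$. One small correction to your write-up: the identity you call the L\^e attaching formula should read $\mu_\0\big(f_{|_{V(z_0)}}\big)=\big(\Gamma^1_{f,z_0}\cdot V(z_0)\big)_\0+\lambda^1_{f,z_0}$, not $\lambda^0_{f,z_0}+\lambda^1_{f,z_0}$, since $\lambda^0_{f,z_0}=\big(\Gamma^1_{f,z_0}\cdot V(\partial f/\partial z_0)\big)_\0$ is in general a different intersection number; your conclusion in case (1) survives because $\lambda^0_{f,z_0}=0$ forces $\Gamma^1_{f,z_0}=0$ at the origin, so that $\mu_\0\big(f_{|_{V(z_0)}}\big)=\lambda^1_{f,z_0}$ anyway, and case (2) never actually uses the formula.
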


\medskip

\section{Concluding Remarks}

As we stated earlier, our interest in the beta invariant is when the critical locus of $f$ is itself singular, since the reason we defined the beta invariant is because it arose naturally while we were considering the conjecture, \conjref{conj:bob}, of Fern\'andez de Bobadilla. We now refer to this conjecture as the {\bf beta conjecture} or the {\bf $\beta_f$ conjecture}.

\smallskip

The beta conjecture is related to another conjecture: L\^e's conjecture (see, for instance, \cite{bobleconj}). The suspicion is that the proof of the beta conjecture will be very difficult, and will require new techniques. Good candidates for counterexamples are also hard to produce. 

We believe that viewing the problem in terms of $\beta_f$ may help regardless of whether the conjecture is true or false. If the beta conjecture is true, describing the question in terms of $\beta_f$ may lead to an algebraic proof. If the beta conjecture is false, showing that $\beta_f=0$ may be the easiest way to verify that one has a counterexample.

However, even if a proof of, or counterexample to,  the beta conjecture is difficult, there are other questions which are interesting and, perhaps, more approachable.

\bigskip

\begin{ques} Is  the beta conjecture true if $f$ is quasi-homogeneous?
\end{ques}

\medskip

\begin{ques} Is  the beta conjecture true if $\Sigma f$ is contained in a smooth surface? That is, after an analytic change of coordinates, is the beta conjecture true if $\Sigma f$ is contained in a $2$-plane?
\end{ques}

\medskip

It seems to be difficult to produce hypersurfaces with a critical locus which is $1$-dimensional, singular, irreducible, and with a small $\beta_f$. The case where $\beta_f=0$ is what is conjectured not to be possible. The case where $\beta_f=1$ is not possible by \thmref{thm:beta2}. Our question is:

\begin{ques} Is it possible for $\beta_f=2$ or $3$, if $f$ has a  critical locus which is $1$-dimensional, singular, and irreducible?
\end{ques} 

Related to the above question, we ask:

\begin{ques} Is there a relationship between $\beta_f$ and the Milnor number $\mu_\0(\Sigma f)$ of the curve $\Sigma f$, using the Milnor number of Buchweitz and Greuel in \cite{buchgreuel}? Is there, perhaps, some simple relationship, like something of the form $\beta_f\geq 2\mu_\0(\Sigma f)$?
\end{ques}

\medskip

\bibliographystyle{plain}
\bibliography{Masseybib}
\end{document}